\documentclass[12pt,a4paper]{article}
\usepackage[utf8]{inputenc}
\usepackage[T1]{fontenc}
\usepackage[english]{babel}
\usepackage{amsmath, amssymb, amsfonts, amsthm, mathtools}
\usepackage{bm}
\usepackage{geometry}
\usepackage{hyperref}
\usepackage{color}
\usepackage{graphicx}
\usepackage{tikz}
\usepackage{enumerate}
\usepackage{enumitem}

\allowdisplaybreaks 
\newtheorem{theorem}{Theorem}[section]
\newtheorem{lemma}{Lemma}[section]

\newtheorem{corollary}{Corollary}[section]

\begin{document}

\title{\Large Note on Fractional Sums with Fixed GCD}
\author{Meselem KARRAS}
\date{}
\maketitle

\begin{abstract}
	We investigate fractional sums of arithmetic functions over products of two or three integers, with emphasis on fixed greatest common divisors and multiplicative weights. Let $f$ be an arithmetic function satisfying $f(n) \ll n^\alpha$ for some $0 \le \alpha < 1$. For $r \ge 2$, let $\tau_r(n)$ denote the number of representations of $n$ as a product of $r$ positive integers, and more generally, $\tau_r^{(d)}(n)$ the number of representations with $\gcd$ factors equal to $d$. We establish asymptotic formulas for the fractional sums
	\[
	S_{f,r}^{(d)}(x) = \sum_{n \le x} \tau_r^{(d)}(n) f\!\left(\left\lfloor \frac{x}{n}\right\rfloor \right),
	\]
	in the cases $r=2$ and $r=3$.

\noindent \\
\textbf{MSC 2020:} 11A25, 11N37.\\ \textbf{Keywords:} Arithmetic
functions, fractional sum, asymptotic formula.
\end{abstract}

\section{Introduction}

Let $r \ge 2$ be a fixed integer, and let $f$ be an arithmetic function satisfying
$f(n) \ll n^\alpha$ with $0 \le \alpha < 1$. We consider the fractional sum
\begin{equation}
	S_{f,r}(x)
	=
	\sum_{n \le x}
	\tau_r(n)
	f\!\left(\left\lfloor \frac{x}{n} \right\rfloor\right),
	\label{eq:Sr}
\end{equation}
where $\tau_r(n)$ denotes the number of representations of $n$ as a product of
$r$ positive integers:
\[
\tau_r(n)
=
\#\{(n_1,\dots,n_r)\in\mathbb{N}^r : n_1\cdots n_r = n\}.
\]

For $r=2$, the sum \eqref{eq:Sr} was studied by Karras, Li, and Stucky
\cite{KA-LI-ST}, who proved the asymptotic formula
\begin{equation}
	S_{f,2}(x)
	=
	C_1(f)\, x \log x
	+
	C_2(f)\, x
	+
	O\!\left( x^{\frac{4+3\alpha}{7}+\varepsilon} \right),
	\label{eq:2}
\end{equation}
with constants
\begin{align*}
	C_1(f)
	&=
	\sum_{k \ge 1} \frac{f(k)}{k(k+1)}, \\
	C_3(f)
	&=
	\sum_{k \ge 1}
	f(k)
	\left(
	\frac{\log k}{k}
	-
	\frac{\log(k+1)}{k+1}
	\right), \\
	C_2(f)
	&=
	(2\gamma - 1) C_1(f) - C_3(f),
\end{align*}
where $\gamma$ denotes Euler's constant.

For any fixed integer $d \ge 1$, we define the sum with fixed greatest common divisor
\begin{equation}
	S_{f,r}^{(d)}(x)
	=
	\sum_{n \le x}
	\tau_r^{(d)}(n)
	f\!\left(\left\lfloor \frac{x}{n} \right\rfloor\right),
	\label{eq:Sr_d}
\end{equation}
where $\tau_r^{(d)}(n)$ counts the representations
\[
n = n_1 n_2 \cdots n_r
\quad \text{with} \quad
\gcd(n_1,\dots,n_r)=d.
\]

In particular, for $d=1$, we denote 
\[
\tau_r^{(1)}(n) = \tau_r^\ast(n),
\]
and accordingly
\[
S_{f,r}^{(1)}(x) = S_{f,r}^\ast(x).
\]

A direct connection between these quantities follows from the Möbius inversion formula:
\[
\tau_r^{(d)}(n) = \tau_r^\ast\!\left(\frac{n}{d^r}\right) \quad \text{whenever } d^r \mid n,
\]
and consequently
\[
S_{f,r}^{(d)}(x) = S_{f,r}^\ast\!\left(\frac{x}{d^r}\right).
\]

For real numbers $a$ and $b$ such that $a+b \neq -1$, we also consider the weighted fractional sum
\begin{equation*}
	F_{f}^{a,b,(d)}(x)
	=
	\sum_{\substack{ mn\leq x \\ \gcd(m,n)=d}} m^{b} n^{a} f\!\left( \left\lfloor \frac{x}{mn}\right\rfloor \right) .
\end{equation*}

\medskip

\noindent
In this note, we establish asymptotic formulas for the fractional sums \eqref{eq:Sr} and \eqref{eq:Sr_d}
in the cases $r=2$ and $r=3$, both for unrestricted tuples and under the coprimality condition. Moreover, we provide upper bounds for the weighted sums 
$F_{f}^{a,b,(d)}(x)$. Our results extend the work in \cite{KA-LI-ST} to settings with prescribed greatest common divisors and polynomial weights, thereby generalizing classical divisor-sum estimates to multiplicative interactions under constraints.

\section{Main Result}

\begin{theorem}
	\label{thm:1}
	Let $f$ be an arithmetic function satisfying $f(n)\ll n^{\alpha}$ for some $%
	\alpha \in [0,1)$, and let $d\ge 1$ be an integer such that $d \ll x^{\frac{%
			3-3\alpha}{14}-\varepsilon}$ for a given $\varepsilon>0$. Then 
	\begin{equation*}
		S_{f,2}^{(d)}(x) = \frac{C_1(f)}{\zeta(2)}\,\frac{x}{d^2} \log\!\left(\frac{x}{%
			d^2}\right) + C_f \frac{x}{d^2} + O\!\left(x^{\frac{4+3\alpha}{7}%
			+\varepsilon}\right), \qquad (x\to\infty),
	\end{equation*}
	where 
	\begin{equation*}
		C_f=\sum_{k=1}^{\infty} \frac{\mu(k)}{k^2}\, \left(C_2(f)-2C_1(f)\log
		k\right).
	\end{equation*}
\end{theorem}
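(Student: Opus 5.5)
The plan is to reduce to the case $d=1$ and then remove the coprimality condition by Möbius inversion, feeding in the known formula \eqref{eq:2} at each scale.

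\textbf{Reduction to $d=1$.} By the relation $S_{f,2}^{(d)}(x)=S_{f,2}^\ast(x/d^2)$ noted in the introduction, it suffices to prove, with $y=x/d^2$,
\[
S_{f,2}^\ast(y)=\frac{C_1(f)}{\zeta(2)}\,y\log y+C_f\,y+O\!\left(y^{\frac{4+3\alpha}{7}+\varepsilon}\right).
\]
Since $y\le x$, the final error is then at most $x^{\frac{4+3\alpha}{7}+\varepsilon}$. The hypothesis on $d$ is presumably only needed to keep $y\to\infty$.

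\textbf{Removing the coprimality.} Write $\tau_2^\ast(n)=\sum_{k^2\mid n}\mu(k)\,\tau_2(n/k^2)$. This holds because the condition $\gcd(n_1,n_2)=1$ is detected by $\sum_{k\mid n_1,\,k\mid n_2}\mu(k)$. Substituting $n=k^2m$ gives
\[
S_{f,2}^\ast(y)=\sum_{k\le\sqrt y}\mu(k)\sum_{m\le y/k^2}\tau_2(m)\,f\!\left(\left\lfloor\frac{y/k^2}{m}\right\rfloor\right)=\sum_{k\le\sqrt y}\mu(k)\,S_{f,2}(y/k^2),
\]
where we used $\lfloor y/(k^2m)\rfloor=\lfloor (y/k^2)/m\rfloor$.

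\textbf{Main terms and tails.} Split the range of $k$ at a parameter $K$.

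For $k\le K$, apply \eqref{eq:2} with $x$ replaced by $y/k^2$. The main terms are
\[
\frac{y}{k^2}\Bigl(C_1\log y-2C_1\log k+C_2\Bigr).
\]
Summing over $k\le K$ and completing to all $k$ using $\sum_k \mu(k)/k^2=1/\zeta(2)$ yields $\frac{C_1}{\zeta(2)}\,y\log y+C_f\,y$. The completion costs $O\!\left(y\log y/K\right)$.

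The error terms from \eqref{eq:2} contribute
\[
\sum_{k\le K}\left(\frac{y}{k^2}\right)^{\theta+\varepsilon}\ll y^{\theta+\varepsilon},\qquad \theta=\frac{4+3\alpha}{7}<1,
\]
since $2\theta>1$ makes the series in $k$ converge.

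For $K<k\le\sqrt y$, use a trivial bound. From $f\ll n^\alpha$ and $\sum_{m\le z}\tau(m)(z/m)^\alpha\ll z\log z$, we get $S_{f,2}(z)\ll z\log z$. The tail is therefore $\ll \sum_{k>K}(y/k^2)\log y\ll y\log y/K$.

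\textbf{Balancing.} Choose $K=y^{1-\theta}$, so both $y\log y/K$ and the completion error are $\ll y^{\theta}\log y$, which is absorbed into the $\varepsilon$.

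\textbf{Main obstacle.} The only delicate point is uniformity. We need \eqref{eq:2} with an implied constant independent of the scale, valid for all $y/k^2\ge y^{2\theta-1}$, which is large. This is standard from the proof in \cite{KA-LI-ST}, and it is where the size restriction on $d$ (hence on $y$) enters.
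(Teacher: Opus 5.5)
Your argument is correct and is essentially the paper's proof: reduce to $d=1$ via $S_{f,2}^{(d)}(x)=S_{f,2}^{\ast}(x/d^2)$, remove the coprimality with Lemma~\ref{lem:2}, insert \eqref{eq:2} at each scale $y/k^2$, and complete the series $\sum_k \mu(k)k^{-2}\bigl(C_2(f)-2C_1(f)\log k\bigr)$; your $C_f$ agrees with the statement, but since $\sum_k \mu(k)\log k/k^2=\zeta'(2)/\zeta(2)^2$, the paper's intermediate constant $N_f$ should carry $-2C_1(f)\zeta'(2)/\zeta^2(2)$ rather than $+$. Your split at $K=y^{1-\theta}$ is harmless but unnecessary, and the ``uniformity obstacle'' does not exist: \eqref{eq:2} holds for all $z\ge 1$ with a constant depending only on $f,\alpha,\varepsilon$ (for bounded $z$ both sides are $O(1)$), so the paper applies it for every $k\le\sqrt{y}$ at a completion cost of only $O(\sqrt{y}\log y)$, and the restriction on $d$ is not used anywhere in the proof --- it only ensures that the main term $x/d^2\gg x^{\theta+2\varepsilon}$ dominates the error.
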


\begin{lemma}
	\label{lem:1}
	Let $r\ge 1$ be fixed. For every integer $n\ge 1$, one has
	\[
	\tau_r^{(*)}(n)
	=
	\sum_{\ d^r \mid n}
	\mu(d)\,
	\tau_r\!\left(\frac{n}{d^r}\right),
	\]
	where $\mu$ denotes the Möbius function.
\end{lemma}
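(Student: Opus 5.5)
The plan is to prove Lemma~\ref{lem:1} by the standard Möbius sieve on the common divisor of the $r$ factors. Write $\tau_r(n)$ as a count over all $r$-tuples and group the tuples according to $g=\gcd(n_1,\dots,n_r)$. Then I will detect the condition $g=1$ with the identity $\sum_{d\mid g}\mu(d)=[g=1]$. This gives
\[
\tau_r^{\ast}(n)=\sum_{n_1\cdots n_r=n}\ \sum_{d\mid \gcd(n_1,\dots,n_r)}\mu(d).
\]

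Next I interchange the two sums, so that $d$ is the outer variable. A tuple contributes to a given $d$ exactly when $d\mid n_i$ for every $i$. In that case write $n_i=d m_i$. Then $n=d^r m_1\cdots m_r$, which forces $d^r\mid n$, and $(m_1,\dots,m_r)$ ranges over all factorizations of $n/d^r$ into $r$ positive factors.

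The map $(n_1,\dots,n_r)\mapsto(m_1,\dots,m_r)$ is a bijection between tuples with product $n$ all divisible by $d$ and tuples with product $n/d^r$. Its inverse is multiplication of each coordinate by $d$. So for each $d$ with $d^r\mid n$ the inner count is $\tau_r(n/d^r)$, and for every other $d$ it is $0$. Summing over $d$ yields exactly the claimed formula. The case $r=1$ is also covered: there $\tau_1\equiv 1$ and the identity reduces to $\sum_{d\mid n}\mu(d)=[n=1]$.

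There is no real obstacle. The only point needing care is the bookkeeping in the interchange of summation: the divisibility condition on $d$ must be transferred correctly from the individual $n_i$ to $d^r\mid n$, and the bijection $n_i=dm_i$ must be checked to be exact, neither losing nor double-counting tuples. All sums involved are finite, so the interchange needs no convergence argument.
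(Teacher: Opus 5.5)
Your proposal is correct and follows the paper's proof essentially step for step: insert the Möbius detector $\sum_{d\mid \gcd(n_1,\dots,n_r)}\mu(d)$, interchange the finite sums, and substitute $n_i=d\,m_i$ to obtain $\tau_r(n/d^r)$ exactly when $d^r\mid n$. Your explicit bijection check and the $r=1$ sanity case are small additions the paper leaves implicit.
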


\begin{proof}
We use the classical Möbius identity (see e.g., \cite{Apostol}).
	\[
	\mathbf{1}_{\gcd(a_1,\dots,a_r)=1}
	=
	\sum_{d\mid \gcd(a_1,\dots,a_r)} \mu(d).
	\]
	Inserting this identity into the definition of $\tau_r^{(*)}(n)$ yields
	\[
	\tau_r^{*}(n)
	=
	\sum_{\substack{a_1\cdots a_r = n}}
	\sum_{d\mid \gcd(a_1,\dots,a_r)} \mu(d).
	\]
	Since all sums are finite, we may interchange the order of summation to obtain
	\[
	\tau_r^{*}(n)
	=
	\sum_{d\ge 1} \mu(d)
	\sum_{\substack{a_1\cdots a_r = n \\ d\mid a_1,\dots,d\mid a_r}} 1.
	\]
	
	The condition $d\mid a_i$ for all $i=1,\dots,r$ is equivalent to writing
	$a_i = d\,b_i$.
	Then the constraint $a_1\cdots a_r = n$ becomes
	\[
	d^r\, b_1\cdots b_r = n.
	\]
	Hence, such representations exist if and only if $d^r\mid n$, and in this case
	the inner sum equals $\tau_r(n/d^r)$. Therefore,
	\[
\tau_r^{*}(n)
	=
	\sum_{\ d^r \mid n}
	\mu(d)\,
	\tau_r\!\left(\frac{n}{d^r}\right),
	\]
	which completes the proof.
\end{proof}

\begin{lemma}
	\label{lem:2}
	Let $r\ge 2$ and let $f$ be an arithmetic function. For any $x\ge 1$, we have
	\[
	S_{f,r}^{\ast}(x)
	=
	\sum_{d\le x^{1/r}} \mu(d)\,
	S_{f,r}\!\left(\frac{x}{d^r}\right).
	\]

\end{lemma}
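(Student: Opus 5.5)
The argument is a direct substitution of Lemma~\ref{lem:1} into the definition of $S_{f,r}^{\ast}(x)$, followed by an interchange of the two finite sums and a change of variable. Starting from
\[
S_{f,r}^{\ast}(x)=\sum_{n\le x}\tau_r^{\ast}(n)\, f\!\left(\left\lfloor \frac{x}{n}\right\rfloor\right),
\]
we replace $\tau_r^{\ast}(n)$ by $\sum_{d^r\mid n}\mu(d)\tau_r(n/d^r)$ and obtain
\[
S_{f,r}^{\ast}(x)=\sum_{n\le x}\ \sum_{d^r\mid n}\mu(d)\,\tau_r\!\left(\frac{n}{d^r}\right) f\!\left(\left\lfloor \frac{x}{n}\right\rfloor\right).
\]
Both sums are finite, so the order of summation may be interchanged. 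The outer sum then runs over $d\ge 1$, and the inner sum runs over the multiples $n=d^r m$ with $d^r m\le x$.

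For fixed $d$, the inner sum is empty unless $d^r\le x$, that is, $d\le x^{1/r}$. This is exactly why the range of $d$ is truncated as stated. Writing $n=d^r m$, the inner sum becomes
\[
\sum_{m\le x/d^r}\tau_r(m)\, f\!\left(\left\lfloor \frac{x}{d^r m}\right\rfloor\right).
\]

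The only point needing care is the argument of $f$. Since $\frac{x}{n}=\frac{x/d^r}{m}$ holds exactly, we have $\lfloor x/n\rfloor=\lfloor (x/d^r)/m\rfloor$ with no rounding issue. The inner sum is therefore precisely $S_{f,r}(x/d^r)$, where the definition \eqref{eq:Sr} is read with the real parameter $x/d^r$ in place of $x$. The upper limit $m\le x/d^r$ also matches that definition for real arguments.

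Summing over $d\le x^{1/r}$ with weight $\mu(d)$ gives the claimed identity. No step here is a genuine obstacle. The only things to check are the justification of the interchange, which is immediate from finiteness, and the consistency of the floor function under the substitution, which follows from the exact equality $x/n=(x/d^r)/m$.
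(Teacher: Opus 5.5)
Your proof is correct and matches the paper's argument: you substitute Lemma~\ref{lem:1}, interchange the finite sums, set $n=d^r m$, and identify the inner sum as $S_{f,r}(x/d^r)$, which is empty for $d>x^{1/r}$. Your explicit check that $\lfloor x/n\rfloor=\lfloor (x/d^r)/m\rfloor$ is a detail the paper leaves implicit.
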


\begin{proof}
	By the definition of $S_{f,r}^{\ast}(x)$ and Lemma~\ref{lem:1}, we have 
	\[
	S_{f,r}^{\ast}(x)
	=
	\sum_{n\le x}
	f\!\left(\left\lfloor \frac{x}{n}\right\rfloor\right)
	\sum_{\ d^r \mid n}
	\mu(d)\,
	\tau_r\!\left(\frac{n}{d^r}\right).
	\]
	
	Exchanging the order of summation and setting $n = d^r m$, we obtain
	\[
	S_{f,r}^{\ast}(x)
	=
	\sum_{d\ge 1} \mu(d)
	\sum_{m\le x/d^r}
	\tau_r(m)\,
	f\!\left(\left\lfloor \frac{x/d^r}{m}\right\rfloor\right).
	\]
	
	The inner sum is exactly $S_{f,r}(x/d^r)$, and it is empty for $d>x^{1/r}$.
	Therefore,
	\[
	S_{f,r}^{\ast}(x)
	=
	\sum_{d\le x^{1/r}} \mu(d)\,
	S_{f,r}\!\left(\frac{x}{d^r}\right).
	\]

\end{proof}

\begin{lemma}
\label{lem:3} We have 
\begin{equation*}
\sum_{d>\sqrt{x}} \frac{\mu(d)}{d^2}=O(x^{-1/2}),\qquad \sum_{d>\sqrt{x}} 
\frac{\mu(d)\log d}{d^2}=O\!\left(\frac{\log x}{\sqrt{x}}\right).
\end{equation*}
\end{lemma}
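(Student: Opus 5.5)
Both estimates follow from the trivial bound $|\mu(d)|\le 1$ and a comparison of the tails with integrals; no cancellation in $\mu$ is needed. Set $y=\sqrt{x}$ and assume $x\ge 4$ (for bounded $x$ both sums are bounded by absolutely convergent series, hence by constants, and the claims are trivial).

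For the first sum, write
\[
\Bigl|\sum_{d>y}\frac{\mu(d)}{d^2}\Bigr|\le \sum_{d>y}\frac{1}{d^2}.
\]
Every term with $d>y$ satisfies $d\ge \lfloor y\rfloor+1$, and $1/d^2\le \int_{d-1}^{d}t^{-2}\,dt$. Hence
\[
\sum_{d>y}\frac{1}{d^2}\le \int_{\lfloor y\rfloor}^{\infty}\frac{dt}{t^2}=\frac{1}{\lfloor y\rfloor}\le \frac{2}{y}
\]
for $y\ge 1$. This gives $O(x^{-1/2})$.

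For the second sum, bound in the same way by $\sum_{d>y}(\log d)/d^2$. The function $t\mapsto(\log t)/t^2$ is decreasing for $t\ge \sqrt{e}$, so each term is at most $\int_{d-1}^{d}(\log t)\,t^{-2}\,dt$ once $d-1\ge 2$. This gives
\[
\sum_{d>y}\frac{\log d}{d^2}\le \int_{\lfloor y\rfloor}^{\infty}\frac{\log t}{t^2}\,dt=\frac{\log\lfloor y\rfloor+1}{\lfloor y\rfloor}\ll \frac{\log y}{y},
\]
using the antiderivative $-(\log t+1)/t$. Since $\log y=\tfrac12\log x$, the bound is $O(x^{-1/2}\log x)$.

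The only point needing care is the monotonicity threshold for $(\log t)/t^2$. I handle it by taking $x$ large enough that $\lfloor y\rfloor\ge 2$; the finitely many remaining cases are absorbed into the implied constants, since both sums are then bounded.
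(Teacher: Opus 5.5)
Your proof is correct. For the first sum it is the paper's argument ($|\mu(d)|\le 1$ plus comparison with $\int t^{-2}\,dt$), done more carefully. The paper's inequality $\sum_{d>\sqrt{x}} d^{-2}\le\int_{\sqrt{x}}^{\infty}t^{-2}\,dt$ is actually false when $\sqrt{x}$ sits just below an integer: for $\sqrt{x}=1.99$ the left side is $\zeta(2)-1\approx 0.645$ and the right side is about $0.503$. Your lower limit $\lfloor y\rfloor$ gives a true inequality at the cost of a harmless factor $2$.

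For the second sum the routes genuinely differ. The paper bounds $\log d\le\log x$ and falls back on the first estimate, but it justifies this only for $d\le x$. The tail runs over all $d>\sqrt{x}$, including $d>x$ where $\log d>\log x$, so the termwise bound is not available there. To make that route rigorous one would split at $d=x$ and treat $d>x$ separately, e.g.\ via $\log d\ll d^{1/2}$, which gives $\sum_{d>x}d^{-3/2}\ll x^{-1/2}$. Your direct comparison with $\int_{\lfloor y\rfloor}^{\infty}(\log t)\,t^{-2}\,dt=(\log\lfloor y\rfloor+1)/\lfloor y\rfloor$, valid once $(\log t)/t^{2}$ is decreasing, handles the whole tail in one step with no splitting. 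It is the cleaner and fully rigorous version.

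One minor caveat on your reduction to large $x$. The second bound cannot hold uniformly as $x\to 1^{+}$: $\log x/\sqrt{x}\to 0$, while for $1\le x<4$ the sum equals the nonzero constant $\zeta'(2)/\zeta(2)^2$. The lemma should therefore be read for $x\ge 2$ (or $x\to\infty$), and in that range your ``bounded $x$'' remark is fine.
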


\begin{proof}
Since $|\mu (d)|\leq 1$, 
\begin{equation*}
\sum_{d>\sqrt{x}}\frac{\mu (d)}{d^{2}}\leq \sum_{d>\sqrt{x}}\frac{1}{d^{2}}%
\leq \int_{\sqrt{x}}^{\infty }\frac{dt}{t^{2}}=x^{-1/2}.
\end{equation*}%
For the second estimate, using $|\log d|\leq \log x$ for $d\leq x$, 
\begin{equation*}
\sum_{d>\sqrt{x}}\frac{|\mu (d)\log d|}{d^{2}}\leq (\log x)\sum_{d>\sqrt{x}}%
\frac{1}{d^{2}}=O\!\left( \frac{\log x}{\sqrt{x}}\right) .
\end{equation*}
\end{proof}

\begin{proof}[Proof of Theorem~\ref{thm:1}]
By Lemma \ref{lem:2} and the asymptotic formula \eqref{eq:2}, writing 
\begin{equation*}
\theta =\frac{4+3\alpha }{7},
\end{equation*}%
we have 
\begin{equation*}
S_{f,2}^{(d)}(x)=\sum_{d\leq \sqrt{x}}\mu (d)\left( C_{1}(f)\frac{x}{d^{2}}%
\log \frac{x}{d^{2}}+C_{2}(f)\frac{x}{d^{2}}+O\!\left( \left( \frac{x}{d^{2}}%
\right) ^{\theta +\varepsilon }\right) \right) .
\end{equation*}

After standard computations and using Lemma \ref{lem:3}, we obtain 
\begin{equation*}
S_f^{(1)}(x) = \frac{C_1(f)}{\zeta(2)}x\log x + \left(\frac{C_2(f)}{\zeta(2)}
+ 2 C_1(f)\frac{\zeta^{\prime }(2)}{\zeta^2(2)}\right)x +
O\!\left(x^{\theta+\varepsilon}+x^{1/2}\log x\right).
\end{equation*}

Setting 
\begin{equation*}
M_f=\frac{C_1(f)}{\zeta(2)},\qquad N_f=\frac{C_2(f)}{\zeta(2)} +2C_1(f)\frac{%
\zeta^{\prime }(2)}{\zeta^2(2)},
\end{equation*}
we have 
\begin{equation*}
S_f^{(1)}(x)=M_fx\log x+N_fx +O\!\left(x^{\theta+\varepsilon}+x^{1/2}\log
x\right).
\end{equation*}

Finally, 
\begin{equation*}
S_{f}^{(d)}(x)=S_{f}^{(1)}\!\left( \frac{x}{d^{2}}\right) ,
\end{equation*}%
which completes the proof.
\end{proof}

\begin{theorem}
\label{thm:2} 
We have 
\begin{equation*}
	F_{f}^{a,b,(d)}(x)\ll _{d}d^{b-a-2-2\varepsilon }x^{a+1+\varepsilon }, \qquad (x\to\infty).
\end{equation*}
\end{theorem}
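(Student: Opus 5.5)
The plan is to reduce the fixed-gcd condition to a coprimality condition by scaling, and then to bound the resulting double sum trivially. Write $m=dm'$ and $n=dn'$ with $\gcd(m',n')=1$. Then $m^bn^a=d^{a+b}m'^bn'^a$, the condition $mn\le x$ becomes $m'n'\le y$ with $y:=x/d^2$, and $\lfloor x/(mn)\rfloor=\lfloor y/(m'n')\rfloor$. Hence
\[
F_f^{a,b,(d)}(x)=d^{a+b}\sum_{\substack{m'n'\le y\\ \gcd(m',n')=1}} m'^{b}n'^{a}\, f\!\left(\left\lfloor \frac{y}{m'n'}\right\rfloor\right).
\]
This is the weighted analogue of the identity $S^{(d)}_{f,r}(x)=S^{*}_{f,r}(x/d^r)$ used in Theorem~\ref{thm:1}. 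Since the target bound is $d^{a+b}\,y^{a+1+\varepsilon}=d^{a+b}d^{-2a-2-2\varepsilon}x^{a+1+\varepsilon}=d^{b-a-2-2\varepsilon}x^{a+1+\varepsilon}$, it suffices to prove
\[
G(y):=\sum_{m'n'\le y} m'^{b}n'^{a}\left|f\!\left(\left\lfloor \tfrac{y}{m'n'}\right\rfloor\right)\right| \ll y^{a+1+\varepsilon}.
\]
Here I drop the coprimality condition. That is harmless because all terms are now nonnegative, and alternatively one could reinsert it via Möbius as in Lemma~\ref{lem:1}, at the cost of a factor $\sum_k k^{a+b}\cdot(\ldots)$.

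For $G(y)$, I would group the terms according to $k=m'n'$, or else use the hypothesis $f(j)\ll j^{\alpha}$ directly on $\lfloor y/k\rfloor\le y/k$. This gives $|f(\lfloor y/k\rfloor)|\ll (y/k)^{\alpha}$. Then I sum over $n'\le y/m'$ first:
\[
\sum_{n'\le y/m'} n'^{a}\Bigl(\frac{y}{m'n'}\Bigr)^{\alpha}\ll \Bigl(\frac{y}{m'}\Bigr)^{a+1}\qquad(a-\alpha>-1),
\]
since the inner sum is dominated by its largest terms. Summing over $m'$ then gives
\[
G(y)\ll y^{a+1}\sum_{m'\le y} m'^{\,b-a-1}.
\]
The final sum over $m'$ is $\ll y^{\varepsilon}$ when $b\le a$, which produces the claimed $y^{a+1+\varepsilon}$. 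In the opposite range I would instead sum over $m'$ first, using the symmetry of the roles of $(m',a)$ and $(n',b)$, or split at $m'n'$ via the Dirichlet hyperbola method with $m'\le\sqrt y$ versus $n'\le\sqrt y$. In each piece one sums first over the variable carrying the larger exponent. The hypothesis $a+b\neq -1$ is what keeps the logarithmic degenerate case out of the partial sums $\sum_{k\le y}k^{a+b}\cdots$.

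The main obstacle is bookkeeping the exponents so that the final bound is uniformly $y^{a+1+\varepsilon}$ with an implied constant depending only on $d$, $a$, $b$ and $f$, as signalled by the notation $\ll_d$. In particular, the factor $(y/k)^{\alpha}$ coming from $f$ must be absorbed. I would first try to absorb it by using $\alpha<1$ together with the fact that the weight $n'^a$ concentrates the mass near $k\asymp y$, where $\lfloor y/k\rfloor$ is bounded. Concretely, I would split according to $\lfloor y/k\rfloor=j$. The block with a given $j$ contributes $\ll j^{\alpha}\sum_{y/(j+1)<m'n'\le y/j}m'^bn'^a$, and the second factor is $\ll (y/j)^{a+1+\varepsilon}/j$ (a hyperbola-shell estimate). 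Summing $j^{\alpha-a-2}$ over $j$ converges because $\alpha<1$, in the natural case $a\ge 0$. If this is insufficient for negative $a$, I would fall back on the cruder bound above and accept the $x^{\varepsilon}$ loss, which is exactly the slack the statement allows.
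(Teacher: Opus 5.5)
Your reduction to $y=x/d^{2}$, the passage to $|f|$ with the coprimality condition dropped, and the iterated bound $G(y)\ll y^{a+1}\sum_{m'\le y}m'^{\,b-a-1}$ (valid for $a-\alpha>-1$) are all correct, and for $b\le a$ they finish the proof. The step that fails is your plan for the ``opposite range'' $b>a$. There, summing first over the variable with the larger exponent gives $\sum_{n'\le y}n'^{a}(y/n')^{b+1}\asymp y^{b+1}$, not $y^{a+1}$. No hyperbola splitting can improve this, because $y^{b+1}$ is the true size. Take $d=1$, $f\equiv 1$, $a=0$, $b=1$ (so $a+b\neq -1$): the terms with $n=1$ alone give $F_f^{0,1,(1)}(x)\ge\sum_{m\le x}m\gg x^{2}$, whereas the statement claims $\ll x^{1+\varepsilon}$. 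For general fixed $d$, the terms with $n=d$ give $\gg_d x^{b+1}$ whenever $b>-1$. So Theorem~\ref{thm:2} is false for $b>a$. Done correctly, your symmetric argument yields $F_f^{a,b,(d)}(x)\ll_d x^{\max(a,b)+1+\varepsilon}$ (given $\max(a,b)>\alpha-1$), which agrees with the claimed bound exactly when $b\le a$. The missing ingredient is therefore the hypothesis $b\le a$, not better bookkeeping. The paper uses this hypothesis tacitly when it writes $g(p^{\nu})=p^{\nu a}+p^{\nu b}\le 2p^{\nu a}$.

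Similarly, no fallback rescues negative $a$ with $a<\alpha-1$. The single term $m'=n'=1$ contributes $|f(\lfloor y\rfloor)|$, which may be $\asymp y^{\alpha}\gg y^{a+1+\varepsilon}$. The paper excludes this only through the assumption $f(n)\ll n^{\varepsilon}$ in Lemma~\ref{lem:mu-3}. Also, $a+b\neq-1$ plays no role anywhere. The only logarithmic degeneracy in your argument is $b=a$, where $\sum_{m'\le y}1/m'\ll\log y$ is absorbed by $y^{\varepsilon}$.

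In the range where the statement is true ($b\le a$, $a>\alpha-1$), your route differs from the paper's and is simpler. The paper groups terms by $k=uv$ and bounds the multiplicative weight $g(k)=\sum_{uv=k,\ \gcd(u,v)=1}u^{b}v^{a}\le 2^{\omega(k)}k^{a}\ll k^{a+\varepsilon}$. It then invokes the fractional-sum asymptotic of Lemma~\ref{lem:mu-3} for $\sum_{k\le y}k^{a+\varepsilon}f(\lfloor y/k\rfloor)$. You instead insert $|f(j)|\ll j^{\alpha}$ directly and evaluate an iterated power sum. This needs no asymptotic input at all, gives the sharper $y^{a+1}\log y$, and covers any $\alpha<a+1$ rather than only $f(n)\ll n^{\varepsilon}$. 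It also makes the necessary hypotheses visible. Your last paragraph, on the shells $\lfloor y/k\rfloor=j$, is unnecessary and should be dropped. The shell estimate $(y/j)^{a+1+\varepsilon}/j$ ignores the $O(1)$ integers per shell, which dominate once $j\gg\sqrt{y}$.
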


\begin{lemma}
\label{lem:mu-3}For an arithmetic function $f$, such as $f\left( n\right)
\ll n^{\varepsilon }$, $\varepsilon >0,$ and a real $z$ with $z\neq -1,$ 
then 
\begin{equation*}
\sum_{n\leq y}n^{z}f\!\left( \left\lfloor \frac{y}{n}\right\rfloor \right) =%
\frac{c}{z+1}\,y^{z+1}+O\!\left( y^{z+\frac{1}{2}+\varepsilon }\right), \qquad (y\to\infty).
\end{equation*}
\end{lemma}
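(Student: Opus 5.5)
We prove Lemma~\ref{lem:mu-3} with the usual hyperbola-type splitting for fractional sums, as in \cite{KA-LI-ST}. The constant $c$ should be
\[
c=(z+1)\sum_{k\ge1} f(k)\,\frac{k^{-(z+1)}-(k+1)^{-(z+1)}}{z+1}
=\sum_{k\ge 1}f(k)\bigl(k^{-z-1}-(k+1)^{-z-1}\bigr).
\]
This series converges absolutely when $z>-1+\varepsilon$, since $f(k)\ll k^{\varepsilon}$. We therefore work in that range. The range $z<-1$ gives a bounded main term and would be handled the same way with the roles of the sums reversed. Fix a parameter $N\approx\sqrt y$ and split the sum according to whether $n\le y/N$ or $n>y/N$.

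For $n>y/N$ we have $\lfloor y/n\rfloor = k< N$. We group the terms by $k$: the condition $\lfloor y/n\rfloor=k$ means $y/(k+1)<n\le y/k$. This part equals
\[
\sum_{k<N} f(k)\sum_{y/(k+1)<n\le y/k} n^{z}.
\]
By Euler--Maclaurin, the inner sum is
\[
\frac{y^{z+1}}{z+1}\bigl(k^{-z-1}-(k+1)^{-z-1}\bigr)+O\bigl((y/k)^{z}+1\bigr).
\]
(If $z<0$, the error term is $O((y/(k+1))^{z})$.) Summing the main terms over $k<N$ gives $\frac{y^{z+1}}{z+1}c$, minus a tail $\frac{y^{z+1}}{z+1}\sum_{k\ge N}f(k)(\cdots)$. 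Since the bracket is $\ll k^{-z-2}$, this tail is $\ll y^{z+1}N^{-z-1+\varepsilon}$. With $N=\sqrt y$ it is $\ll y^{(z+1)/2+\varepsilon}$, which is within $y^{z+1/2+\varepsilon}$ whenever $z\ge 0$. The error terms contribute at most $\sum_{k<N}k^{\varepsilon}\bigl((y/k)^z+1\bigr)\ll y^{z}N^{1+\varepsilon}+N^{1+\varepsilon}\ll y^{z+1/2+\varepsilon}$ for $z\ge0$.

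For $n\le y/N=\sqrt y$ we use only the trivial bound $f(\lfloor y/n\rfloor)\ll (y/n)^{\varepsilon}$. This gives
\[
\sum_{n\le \sqrt y} n^{z}(y/n)^{\varepsilon}\ll y^{\varepsilon}\bigl(y^{(z+1)/2}+1\bigr),
\]
which is again $\ll y^{z+1/2+\varepsilon}$ for $z\ge 0$.

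The main obstacle is the bookkeeping for negative $z$ (with $z\ne -1$). In that case $(y/k)^z$ is no longer increasing in $y$ in the same way, and the tail bound $y^{(z+1)/2}$ can exceed $y^{z+1/2}$. I would first try to redo the optimisation of $N$ in terms of $z$. If that does not recover the stated exponent $z+\tfrac12+\varepsilon$ for $-1<z<0$, I would settle for the error $O(y^{\max(z+1/2,(z+1)/2)+\varepsilon})$, which suffices for the application to $F_f^{a,b,(d)}$ in Theorem~\ref{thm:2}. There only an upper bound of order $x^{a+1+\varepsilon}$ is needed, obtained by writing $m=dm'$, $n=dn'$, applying the lemma to the inner sum over $n'$ with $y=x/(d^2m')$, and summing over $m'$.
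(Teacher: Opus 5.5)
Your route is genuinely different from the paper's, and for $z\ge 0$ it is the correct one. The paper applies Abel summation to $n^z$ against $B(t)=\sum_{n\le t}f(\lfloor t/n\rfloor)$ and inserts the known asymptotic $B(t)=ct+O(t^{1/2+\varepsilon})$ with $c=\sum_{n\ge1}f(n)/(n(n+1))$. That step is invalid. The coefficients being weighted are $f(\lfloor y/n\rfloor)$ with $y$ held fixed, so the partial sums that enter Abel's formula are $\sum_{n\le t}f(\lfloor y/n\rfloor)$, not $B(t)$. Repairing it would require the behaviour of these partial sums as functions of $t$, which is exactly what your decomposition by $k=\lfloor y/n\rfloor$ supplies.

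Because of this, the paper's constant is right only at $z=0$. Take $f(1)=1$, $f(n)=0$ for $n\ge2$, and $z=1$. The sum is then $\sum_{y/2<n\le y}n=\frac38y^2+O(y)$, whereas the paper's formula predicts $\frac14y^2$. Your constant $c=\sum_k f(k)\bigl(k^{-z-1}-(k+1)^{-z-1}\bigr)$ is the correct one; it reduces to the paper's at $z=0$. Your estimates for $z\ge0$ are also correct. If you bound $\sum_{k<N}k^{\varepsilon}(y/k)^z$ directly instead of using $(y/k)^z\le y^z$, you even get $O\bigl(y^{\max(z,(z+1)/2)+\varepsilon}\bigr)$. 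So the exponent $z+\frac12$ is only an artifact of the factor $y^z\cdot y^{1/2+\varepsilon}$ in the paper's computation.

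For $z<0$ your hesitation is justified, and there is no gap for you to close: the statement is false there, whatever constant is used.
\begin{itemize}
\item For $z<-1$ and $f\equiv1$, the left side tends to $\zeta(-z)>0$, while both terms on the right tend to $0$. So drop your remark that this range is handled ``with the roles of the sums reversed''; your series for $c$ also diverges there.
\item For $-1<z<-\frac12$ and $f\equiv1$, one has $\sum_{n\le y}n^z=\frac{y^{z+1}}{z+1}+\zeta(-z)+O(y^z)$ with $\zeta(-z)\ne0$. This is not within $O(y^{z+1/2+\varepsilon})$.
\item For $-\frac12\le z<0$, let $f$ be the indicator of $\bigcup_j\{\lfloor y_j/n\rfloor : n\le\sqrt{y_j}/2\}$ along a sufficiently lacunary sequence, e.g.\ $y_{j+1}\ge e^{y_j}$. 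The range $n\le\sqrt{y_j}/2$ then contributes $\sim\frac{1}{z+1}(\sqrt{y_j}/2)^{z+1}$. The corresponding part of the main term is only $\sim\frac{1}{4(z+3)}(\sqrt{y_j}/2)^{z+1}$, and all other contributions are negligible by lacunarity. So the error is $\asymp y_j^{(z+1)/2}$, which is exactly your trivial bound.
\end{itemize}
Hence your fallback $O\bigl(y^{(z+1)/2+\varepsilon}\bigr)$ for $-1<z<0$ is the right statement in that range. It still yields the upper bound $y^{z+1+\varepsilon}$ that Theorem~\ref{thm:2} needs whenever $z>-1$.
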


\begin{proof}

Apply Abel summation to $a(n)=n^{z}$ and 
\begin{equation*}
B(y)=\sum_{n\leq y}f\!\left( \left\lfloor \frac{y}{n}\right\rfloor \right) ,
\end{equation*}%
then 

\[
\sum_{n \le y} n^z f\!\left(\left\lfloor \frac{y}{n} \right\rfloor\right)
= y^z B(y) - \int_1^y z t^{z-1} B(t) \, dt,
\]
By the known formula (see e.g., \cite{Stucky}, \cite{Wu} and \cite{Zhai} ), and in
particular, when $\left\vert f\left( n\right) \right\vert \ll n^{\varepsilon
}$, we have 
\begin{equation*}
B(t)=ct+O(t^{\eta })
\end{equation*}
where 
\begin{equation*}
c:=C_{f}=\sum_{n=1}^{\infty }\frac{f\left( n\right) }{n\left( n+1\right) }%
\text{ and }\eta :=\frac{1}{2}+\varepsilon 
\end{equation*}%
and computing the integral gives the required formula.

\end{proof}

\begin{proof}[Proof of Theorem~\ref{thm:2}]
We start with 
\begin{equation*}
F_{f}^{a,b,(d)}(x)=\sum_{\substack{ mn\leq x \\ \gcd \left( m,n\right) =d}}%
m^{b}n^{a}f\!\left( \left\lfloor \frac{x}{mn}\right\rfloor \right) .
\end{equation*}%
Let $m=du$, $n=dv$ with $\gcd (u,v)=1$, we obtain 
\begin{equation*}
F_{f}^{a,b,(d)}(x)=d^{\,a+b}\sum_{\substack{ uv\leq x/d^{2} \\ \gcd (u,v)=1}}%
u^{b}v^{a}f\!\left( \left\lfloor \frac{x}{d^{2}uv}\right\rfloor \right) .
\end{equation*}%
Put $y=x/d^{2}$ and define 
\begin{equation*}
g(n)=\sum_{\substack{ uv=n \\ \gcd (u,v)=1}}u^{b}v^{a}.
\end{equation*}%
Then the previous equality rewrites as 
\begin{equation*}
	F_{f}^{a,b,(d)}(x)=d^{\,a+b}\sum_{n\leq y}g(n)f\!\left( \left\lfloor \frac{y}{n}%
\right\rfloor \right) .
\end{equation*}

It is easy to check that the function $g(n)$ is multiplicative. Thus, for
any prime $p$ and any integer $\alpha \geq 1$, we have 
\begin{equation*}
g(p^{\alpha })=p^{\alpha a}+p^{\alpha b}\leq 2p^{\alpha a}.
\end{equation*}%
Thus, for all $n$, we have 
\begin{equation*}
g(n)\leq 2^{\omega (n)}n^{a}\ll n^{a+\varepsilon },\text{ }\varepsilon >0
\end{equation*}%
It follows that 
\begin{equation*}
	F_{f}^{a,b,(d)}(x)\ll d^{a+b}\sum_{n\leq y}n^{a+\varepsilon }f\left(
\left\lfloor \frac{y}{n}\right\rfloor \right) ,
\end{equation*}%
and by lemma \ref{lem:mu-3}, we have 
\begin{equation*}
	F_{f}^{a,b,(d)}(x)\ll d^{a+b}\,y^{a+1+\varepsilon },
\end{equation*}%
and replacing $y$ by $x/d^{2},$ we obtain
\begin{equation*}
	F_{f}^{a,b,(d)}(x)\ll _{d}d^{b-a-2-2\varepsilon }\,x^{a+1+\varepsilon }.
\end{equation*}
\end{proof}

\begin{corollary}
Let $d\ge 1$ be an integer and $\varepsilon>0$. Assume $x\to \infty$. 
Consider the arithmetic functions $\tau(n)$ and $\mu^2(n)$, which satisfy  
\begin{equation*}
\tau(n) \ll n^\varepsilon, \quad \mu^2(n) \le 1.  
\end{equation*}
Then, for $d \ll x^{3/14-\varepsilon}$, we have the following formulas:

1) For $\tau (n)$: 
\begin{equation*}
S_{\tau }^{(d)}(x)=\frac{C_{1}(\tau )}{\zeta (2)}\,\frac{x}{d^{2}}\log \frac{%
x}{d^{2}}+C_{\tau }\frac{x}{d^{2}}+O\big(x^{4/7+\varepsilon }\big),
\end{equation*}%
with 
\begin{equation*}
C_{1}(\tau )=\sum_{k\geq 1}\frac{\tau (k)}{k(k+1)},\quad C_{2}(\tau
)=(2\gamma -1)C_{1}(\tau )-\sum_{k\geq 1}\tau (k)\left( \frac{\log k}{k}-%
\frac{\log (k+1)}{k+1}\right) ,
\end{equation*}%
\begin{equation*}
C_{\tau }=\sum_{k\geq 1}\frac{\mu (k)}{k^{2}}\big(C_{2}(\tau )-2C_{1}(\tau
)\log k\big).
\end{equation*}
2) For $\mu^2(n)$:  
\begin{equation*}
S_{\mu^2}^{(d)}(x) = \frac{C_1(\mu^2)}{\zeta(2)}\,\frac{x}{d^2} \log\frac{x}{%
d^2} + C_{\mu^2}\frac{x}{d^2} + O\big(x^{4/7+\varepsilon}\big),  
\end{equation*}
with  
\begin{equation*}
C_1(\mu^2)=\sum_{k\ge 1} \frac{\mu^2(k)}{k(k+1)}, \quad 
C_2(\mu^2)=(2\gamma-1)C_1(\mu^2)-\sum_{k\ge 1} \mu^2(k)\left(\frac{\log k}{k}%
-\frac{\log(k+1)}{k+1}\right),  
\end{equation*}
\begin{equation*}
C_{\mu^2} = \sum_{k\ge 1} \frac{\mu(k)}{k^2} \big(C_2(\mu^2) - 2
C_1(\mu^2)\log k \big).  
\end{equation*}
 
\end{corollary}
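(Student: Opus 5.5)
The plan is to read the corollary as two specializations of Theorem~\ref{thm:1}. For each $f\in\{\tau,\mu^2\}$ we must check its hypothesis $f(n)\ll n^{\alpha}$ with a suitable $\alpha\in[0,1)$. We then determine which $\alpha$ produces the claimed error term $x^{4/7+\varepsilon}$ and the admissible range $d\ll x^{3/14-\varepsilon}$.

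\textbf{The case $f=\mu^2$.} Here $\mu^2(n)\le 1$, so we may take $\alpha=0$. Theorem~\ref{thm:1} then gives the error exponent $\frac{4+3\cdot 0}{7}=\frac47$ and the range $d\ll x^{\frac{3-0}{14}-\varepsilon}=x^{3/14-\varepsilon}$. The constants $C_1(\mu^2)$ and $C_2(\mu^2)$ are obtained by substituting $f=\mu^2$ into the definitions of $C_1(f)$, $C_3(f)$ and $C_2(f)=(2\gamma-1)C_1(f)-C_3(f)$. The constant $C_{\mu^2}$ is exactly $C_f$ from Theorem~\ref{thm:1}.

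\textbf{The case $f=\tau$.} The only bound available is $\tau(n)\ll_{\varepsilon} n^{\varepsilon}$, so we take $\alpha=\varepsilon'$ for an arbitrarily small $\varepsilon'>0$. Theorem~\ref{thm:1} then gives the error exponent $\frac{4+3\varepsilon'}{7}+\varepsilon$. Choosing $\varepsilon'$ small in terms of $\varepsilon$ and renaming, this is at most $x^{4/7+\varepsilon}$. The range becomes $d\ll x^{\frac{3-3\varepsilon'}{14}-\varepsilon}$, which again has the form $x^{3/14-\varepsilon}$ after renaming $\varepsilon$. The constants are read off exactly as in the $\mu^2$ case.

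\textbf{Convergence of the constant series.} Each defining series must converge. For $C_1$, the terms satisfy $\tau(k)/(k(k+1))\ll k^{-2+\varepsilon}$. For $C_3$, the mean value theorem gives
\[
\frac{\log k}{k}-\frac{\log(k+1)}{k+1}\ll\frac{\log k}{k^{2}},
\]
so the terms are $\ll k^{-2+\varepsilon}$ for both $f=\tau$ and $f=\mu^2$. For $C_f$, the sum $\sum_k \mu(k)k^{-2}\log k$ converges absolutely.

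\textbf{Main obstacle.} The only delicate point is the $\varepsilon$-bookkeeping for $f=\tau$. The implied constant in $\tau(n)\ll n^{\varepsilon'}$ depends on $\varepsilon'$, and Theorem~\ref{thm:1} is applied with a fixed $\alpha=\varepsilon'$, so the resulting error term has an implied constant depending on $\varepsilon$. This is compatible with the statement, since the corollary's $O$-term is allowed to depend on $\varepsilon$. Beyond this, the argument is pure specialization of Theorem~\ref{thm:1}.
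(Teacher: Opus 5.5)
Your proposal is correct and follows the route the paper intends: the paper gives no separate proof of the corollary and treats it as a direct specialization of Theorem~\ref{thm:1}, with $\alpha=0$ for $\mu^2$ and $\alpha$ arbitrarily small for $\tau$. Your explicit $\varepsilon$-renaming for $\tau$, which covers both the error exponent and the admissible range of $d$, and your convergence checks for $C_1$, $C_3$ and $C_f$ are sound, and they fill in details the paper leaves implicit.
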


The next result provides an upper bound in the case where the arithmetic function $f$ does not satisfy the hypotheses of Theorem~\ref{thm:1}. 
For instance, when $f(n) = \sigma_k(n)$, which grows like $f(n) \ll n^\alpha$ with $\alpha \ge 1$, 
one can obtain a suitable upper bound.

\begin{theorem}
	\label{thm:3} 
	
Let \(k \ge 1\) and \(d \ge 1\) be integers. Then, as \(x \to \infty\), we have
\[
S_{\sigma_k,2}^{(d)}(x) \;\ll\;
\begin{cases}
	\dfrac{x}{d^2} \, \log\!\left(\dfrac{x}{d^2}\right), & \text{if } k = 1,\\[1mm]
	\dfrac{x^k}{d^{2k}}, & \text{if } k > 1.
\end{cases}
\]

	where $$\sigma_k(n) := \sum_{d \mid n} d^k.$$
\end{theorem}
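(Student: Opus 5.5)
Throughout, put $y=x/d^{2}$. By the relation $S_{f,2}^{(d)}(x)=S_{f,2}^{\ast}(x/d^{2})$ from the introduction, it suffices to show
\[
S_{\sigma_k,2}^{\ast}(y)\ll y\log y \ (k=1),\qquad S_{\sigma_k,2}^{\ast}(y)\ll y^{k}\ (k>1).
\]
Since $\sigma_k\ge 0$ and $0\le\tau_2^{\ast}(n)\le\tau(n)$, we have $0\le S_{\sigma_k,2}^{\ast}(y)\le S_{\sigma_k,2}(y)$. So I would bound $\sum_{n\le y}\tau(n)\sigma_k(\lfloor y/n\rfloor)$ directly, without using Theorem~\ref{thm:1}, which does not apply when $\alpha\ge 1$.

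\textbf{Case $k>1$.} I would use $\sigma_k(m)\le\zeta(k)m^{k}$, so the sum is at most $\zeta(k)\,y^{k}\sum_{n\le y}\tau(n)n^{-k}$. For $k>1$ this series converges to $\zeta(k)^2$, which gives $S^{\ast}_{\sigma_k,2}(y)\ll y^{k}$, i.e.\ the bound $x^{k}/d^{2k}$.

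\textbf{Case $k=1$.} Here the same device yields $y\sum_{n\le y}\tau(n)\sigma(\lfloor y/n\rfloor)/\lfloor y/n\rfloor\cdot n^{-1}$, which loses logarithms. So I would instead group by the value $m=\lfloor y/n\rfloor$. The $n$ with $\lfloor y/n\rfloor=m$ lie in $(y/(m+1),y/m]$, so
\[
S_{\sigma,2}(y)=\sum_{m\le y}\sigma(m)\Bigl(D\bigl(\tfrac{y}{m}\bigr)-D\bigl(\tfrac{y}{m+1}\bigr)\Bigr),
\]
where $D(t)=\sum_{n\le t}\tau(n)=t\log t+(2\gamma-1)t+O(t^{1/3})$.

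I would then split the range at $m\asymp\sqrt y$:
\begin{itemize}
\item For $m>\sqrt y$, i.e.\ $n<\sqrt y$, I bound trivially by $\sum_{n\le\sqrt y}\tau(n)\,\sigma(y/n)$. Using $\sigma(m)\ll m\log\log m$, this is $\ll y\log\log y\sum_{n\le \sqrt y}\tau(n)/n$.
\item For $m\le\sqrt y$, I use the mean value of $D$ on short intervals, $D(y/m)-D(y/(m+1))\approx \frac{y}{m^{2}}\log(y/m)$, together with the partial summation input $\sum_{m\le M}\sigma(m)=\frac{\zeta(2)}{2}M^{2}+O(M\log M)$.
\end{itemize}

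\textbf{Main obstacle.} The main obstacle is precisely the logarithm count in the $m\le\sqrt y$ range. The main term there is essentially $y\log y\sum_{m\le\sqrt y}\sigma(m)/m^{2}$, and since $\sum_{m\le M}\sigma(m)/m^{2}\sim\zeta(2)\log M$, a direct evaluation produces $y\log^{2}y$. The trivial range $n<\sqrt y$ likewise contributes about $y\log^{2}y$, up to a $\log\log y$ factor.

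To reach the stated $y\log y$, I would first attempt to exploit cancellation coming from the coprimality: write $\tau_2^{\ast}=\sum_{e^{2}\mid n}\mu(e)\tau(n/e^{2})$ via Lemma~\ref{lem:1}, and use Lemma~\ref{lem:2} to express $S^{\ast}_{\sigma,2}(y)=\sum_{e\le\sqrt y}\mu(e)S_{\sigma,2}(y/e^{2})$. I would then check whether the $\log^{2}$ main terms cancel against $\sum_e\mu(e)e^{-2}$. I expect they do not, since $\sum\mu(e)/e^{2}=1/\zeta(2)\ne 0$, so this is the step I regard as most doubtful. If it fails, what the argument actually delivers is the bound with $\log^{2}(x/d^{2})$ in place of $\log(x/d^{2})$, and the $k=1$ case as stated would need additional input that I do not currently see.
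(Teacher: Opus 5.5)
Your argument for $k>1$ is correct and is essentially the paper's. The paper writes $S^{(d)}_{\sigma_k,2}(x)=\sum_{h\le y}2^{\omega(h)}\sigma_k(\lfloor y/h\rfloor)$ with $y=x/d^2$ and uses convergence of $\sum_h 2^{\omega(h)}h^{-k}$. You instead dominate $\tau_2^\ast=2^{\omega}$ by $\tau$ and get the constant $\zeta(k)^3$.

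For $k=1$ you give no proof, but your suspicion is right and you should push it to its conclusion: the claimed bound is false, so no additional input can exist. $S^{(d)}_{\sigma,2}(x)$ is a sum of nonnegative terms, so a termwise lower bound cannot be undone by any cancellation in the M\"obius expansion. Using only $\sigma(m)\ge m$,
\[
S^{(d)}_{\sigma,2}(x)\ \ge\ \sum_{h\le y}2^{\omega(h)}\Bigl\lfloor \frac{y}{h}\Bigr\rfloor
=\sum_{m\le y}\sum_{h\mid m}2^{\omega(h)}
=\sum_{m\le y}\tau(m^2)\ \sim\ \frac{3}{\pi^2}\,y\log^2 y .
\]
Alternatively, combine $\lfloor y/h\rfloor\ge y/(2h)$ with the case $a=1$ of the paper's own lemma on $\sum_{h\le y}2^{\omega(h)}h^{-a}$. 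Either way $S^{(d)}_{\sigma,2}(x)\gg (x/d^2)\log^2(x/d^2)$, which contradicts the stated $O\bigl((x/d^2)\log(x/d^2)\bigr)$.

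The paper's proof of this case fails at its last step. It bounds $\sigma(n)\ll n^{1+\varepsilon}$, arrives at $y^{1+\varepsilon}\sum_{h\le y}2^{\omega(h)}h^{-1-\varepsilon}$, and then silently drops the factor $y^{\varepsilon}$. What that argument actually proves is only $\ll_\varepsilon (x/d^2)^{1+\varepsilon}$. So the right write-up for $k=1$ is this counterexample, with $(x/d^2)\log^2(x/d^2)$ as the most one can hope to prove.

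One caveat on your remark that your sketch delivers the $\log^2$ bound: as written it does not.
\begin{itemize}
\item On the trivial range, $\sigma(m)\ll m\log\log m$ costs an extra $\log\log y$, giving $y\log^2 y\log\log y$.
\item Summing the pointwise error $O(t^{1/3})$ of $D$ against $\sigma(m)$ over all $m\le\sqrt y$ produces a term of size $y^{7/6}$. The split point must therefore be lowered, e.g.\ to $m\le y^{2/5}$.
\end{itemize}
Removing the $\log\log y$ would require averaging $\sigma(m)/m$ along $m=\lfloor y/h\rfloor$.
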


\noindent
We use the following lemma:

\begin{lemma}
	\label{lem:2omega_all_a}
	Let $a\in\mathbb{R}$. As $y\to\infty$, we have
	\[
	\sum_{h\le y}\frac{2^{\omega(h)}}{h^a}
	=
	\begin{cases}
		\displaystyle
		\frac{C_2}{1-a}\,y^{1-a}\log y
		+ O\!\left(y^{1-a}\right),
		& a<1, \\[1.2em]
		
		\displaystyle
		\frac{C_2}{2}(\log y)^2
		+ O(\log y),
		& a=1, \\[1.2em]
		
		\displaystyle
		C_{a}	+ O\!\left(y^{1-a}\log y\right),
		& a>1,
	\end{cases}
	\]
where
\[
C_{a} := \prod_{p} \left( 1 + \frac{2}{p^a} \right), \quad 
C_2 := \prod_{p} \left( 1 - \frac{1}{p} \right)^2 \left( 1 + \frac{2}{p} \right).
\]

\end{lemma}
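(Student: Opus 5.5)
The plan is to first prove the unweighted estimate
\[
A(y):=\sum_{h\le y}2^{\omega(h)} = C_2\,y\log y+O(y),
\]
and then pass to general $a$ by Abel summation, reading off the three regimes $a<1$, $a=1$, $a>1$ from the behaviour of $\int t^{-a}\log t\,dt$.

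\textbf{Step 1 (the case $a=0$).} Use the identity $2^{\omega(h)}=\sum_{e\mid h}|\mu(e)|$: the unitary, i.e.\ squarefree, divisors of the radical are counted by $2^{\omega}$. This gives
\[
A(y)=\sum_{e\le y}\mu^2(e)\Big\lfloor \frac{y}{e}\Big\rfloor
= y\sum_{e\le y}\frac{\mu^2(e)}{e}+O(y).
\]
Partial summation from $\sum_{e\le t}\mu^2(e)=t/\zeta(2)+O(\sqrt t)$ then yields $\sum_{e\le y}\mu^2(e)/e=\frac{1}{\zeta(2)}\log y+O(1)$. The leading constant is therefore $\operatorname{Res}_{s=1}\big(\sum_h 2^{\omega(h)}h^{-s}\big)/\log$-normalisation, i.e.\ the Euler product $\prod_p(1-p^{-1})^2\,E_p(1)$, where $E_p(s)$ is the local factor of the Dirichlet series of the summand.

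\textbf{Identifying the constants.} This is the step I expect to be the real obstacle. The stated $C_2=\prod_p(1-1/p)^2(1+2/p)$ and $C_a=\prod_p(1+2p^{-a})$ are exactly $\prod_p(1-p^{-1})^2E_p$ and $\prod_pE_p(a)$ when $E_p(s)=1+2p^{-s}$. That is the local factor of $\mu^2(h)2^{\omega(h)}$, not of $2^{\omega(h)}$ over all $h$; for the latter the local factor is $1+2/(p^s-1)$, which gives the constants $1/\zeta(2)$ and $\zeta(a)^2/\zeta(2a)$. So I will carry out Step 1 with the summand $\mu^2(h)2^{\omega(h)}$, which is the reading consistent with the stated products. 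Concretely, I write
\[
\mu^2 2^{\omega}=1*1*g,
\]
where $g$ is multiplicative with $g(p)=0$ and $g(p^k)\ll 1$. The generating series of $g$ is $\prod_p(1-p^{-s})^2(1+2p^{-s})$, which converges absolutely for $\Re s>1/2$. The hyperbola estimate $\sum_{m\le t}\tau(m)=t\log t+O(t)$ then gives $A(y)=G(1)\,y\log y+O(y)$ with $G(1)=C_2$. For the unrestricted sum the identical argument runs with $1+2/(p^s-1)$ in place of $1+2p^{-s}$.

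\textbf{Step 2 (Abel summation).} Write
\[
\sum_{h\le y}2^{\omega(h)}h^{-a}=y^{-a}A(y)+a\int_1^y t^{-a-1}A(t)\,dt,
\]
and insert $A(t)=C_2t\log t+O(t)$.
\begin{itemize}
\item For $a<1$, the main integral is $\frac{C_2}{1-a}y^{1-a}\log y+O(y^{1-a})$, and combining it with the boundary term gives the first line of the statement.
\item For $a=1$, the integrand is $C_2\log t/t+O(1/t)$, which gives $\frac{C_2}{2}(\log y)^2+O(\log y)$.
\item For $a>1$, the full series converges to its Euler product $C_a$. The tail $\sum_{h>y}$ is handled by the same Abel summation over $[y,\infty)$, giving $O(y^{1-a}\log y)$.
\end{itemize}
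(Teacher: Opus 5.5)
Your diagnosis is correct, and it decides the matter: with $C_2$ and $C_a$ as printed, the lemma is false for the summand $2^{\omega(h)}$, so it cannot be proved verbatim. Your own first computation already refutes it. Writing $2^{\omega}=1*\mu^2$ gives $\sum_{h\le y}2^{\omega(h)}=\zeta(2)^{-1}y\log y+O(y)$, whereas $C_2=\prod_p(1-p^{-1})^2(1+2p^{-1})=\zeta(2)^{-2}\prod_p\bigl(1-(p+1)^{-2}\bigr)<\zeta(2)^{-1}$. Likewise, for $a>1$ the Euler factor of $2^{\omega(h)}h^{-a}$ is $1+2/(p^a-1)>1+2p^{-a}$, so the full series equals $\zeta(a)^2/\zeta(2a)\neq C_a$.

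The paper's proof is exactly your Step~2: Abel summation against $F(t)=\sum_{h\le t}2^{\omega(h)}$, the three evaluations of $\int_1^y t^{-a}\log t\,dt$, and the tail estimate for $a>1$. But it takes $F(t)=C_2t\log t+O(t)$ from the literature and asserts $\sum_{h\ge1}2^{\omega(h)}h^{-a}=\prod_p(1+2p^{-a})$. Both facts hold for $\mu^2(h)2^{\omega(h)}$, not for $2^{\omega(h)}$. So the paper's argument breaks at precisely the two places you flagged, while its Abel-summation part is fine and agrees with yours.

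The only difference in route is Step~1. Where the paper cites, you derive the base asymptotic from $\mu^2 2^{\omega}=1*1*g$, or from $2^{\omega}=1*1*g$ with $\sum g(n)n^{-s}=1/\zeta(2s)$. You use absolute convergence of $\sum g(n)n^{-s}$ for $\Re s>1/2$ together with $\sum_{m\le t}\tau(m)=t\log t+O(t)$. This costs a few lines, but it makes the leading constant visible as $G(1)$, which is exactly what exposes the mismatch. It also handles both corrected versions at once, and your argument is correct for each.

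On which correction to adopt: you chose the squarefree reading because it matches the printed products. However, where the lemma is used (proof of Theorem~\ref{thm:3}), $2^{\omega(h)}$ counts coprime pairs $(u,v)$ with $uv=h$ for \emph{every} $h$. So the version the paper needs is the unrestricted one, with $C_2$ replaced by $1/\zeta(2)$ and $C_a$ by $\zeta(a)^2/\zeta(2a)$. Since only orders of magnitude are used there, that application survives.

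Two small wording fixes. First, the Dirichlet series has a double pole at $s=1$, so the leading constant is $\lim_{s\to1}(s-1)^2\sum_h 2^{\omega(h)}h^{-s}$, not a residue. Second, the unitary divisors of $h$ and the divisors of its radical are different sets; they merely both have cardinality $2^{\omega(h)}$.
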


\begin{proof}
	Let
	\[
	F(t):=\sum_{h\le t}2^{\omega(h)}.
	\]
	It is known (see e.g.\ \cite{O.Bord}) that
	\begin{equation}
		\label{eq:F_asymp}
		F(t)=C_2\,t\log t + O(t),
	\end{equation}
	then, for any real $a$,
	\begin{equation}
		\label{eq:abel}
		\sum_{h\le y}\frac{2^{\omega(h)}}{h^a}
		=
		\frac{F(y)}{y^a}
		+
		a\int_1^y \frac{F(t)}{t^{a+1}}\,dt.
	\end{equation}
	Substituting \eqref{eq:F_asymp} into \eqref{eq:abel} gives
	\[
	\sum_{h\le y}\frac{2^{\omega(h)}}{h^a}
	=
	C_2 y^{1-a}\log y
	+
	aC_2\int_1^y t^{-a}\log t\,dt
	+
	O(y^{1-a}).
	\]
We now distinguish three cases.
	
	\medskip
	\noindent
	\textbf{Case $a<1$.}
	It is immediate that
	\[
	\int_1^y t^{-a}\log t\,dt
	=
	\frac{y^{1-a}}{1-a}\log y
	-
	\frac{y^{1-a}}{(1-a)^2}
	+
	O(1).
	\]
	Hence
	\[
	\sum_{h\le y}\frac{2^{\omega(h)}}{h^a}
	=
	\frac{C_2}{1-a}\,y^{1-a}\log y
	+
	O(y^{1-a}).
	\]

	\medskip
	\noindent
	\textbf{Case $a=1$.}
	We have
	\[
	\sum_{h\le y}\frac{2^{\omega(h)}}{h}
	=
	\frac{C_2}{2}(\log y)^2
	+
	O(\log y).
	\]
	
	\medskip
	\noindent
	\textbf{Case $a>1$.}
	By the Euler product, we have
	\[
	\sum_{h\ge1}\frac{2^{\omega(h)}}{h^a}
	=
	\prod_p\left(1+\frac2{p^a}\right)
	\]
	converges absolutely. Writing
	\[
	\sum_{h\le y}\frac{2^{\omega(h)}}{h^a}
	=
	\sum_{h\ge1}\frac{2^{\omega(h)}}{h^a}
	-
	\sum_{h>y}\frac{2^{\omega(h)}}{h^a},
	\]
and using (5), we have
\[
\sum_{h>y} \frac{2^{\omega(h)}}{h^a} 
\ll \int_y^\infty \frac{F(t)}{t^{a+1}} \, dt  
\ll y^{1-a} \log y,
\]
This completes the proof of the lemma.
\end{proof}

\begin{proof}[Proof of Theorem~\ref{thm:3}]
	Write \(m = du\), \(n = dv\) with \(\gcd(u,v)=1\). Then  
	\[
	S_{\sigma_k}^{(d)}(x) = \sum_{\substack{uv \le x/d^2 \\ \gcd(u,v)=1}} \sigma_k\Big(\Big\lfloor \frac{x}{d^2 uv} \Big\rfloor\Big).
	\]  
	
	Let \(h = uv\). The number of pairs \((u,v)\) with \(\gcd(u,v)=1\) and \(uv=h\) is exactly \(2^{\omega(h)}\), so  
	\[
	S_{\sigma_k}^{(d)}(x) = \sum_{h \le x/d^2} 2^{\omega(h)} \, \sigma_k\Big(\Big\lfloor \frac{x}{d^2 h} \Big\rfloor\Big).
	\]
	
	\medskip
	\textbf{Case \(k>1\):}  
	Use \(\sigma_k(n) \ll n^k\):
	\[
	S_{\sigma_k}^{(d)}(x) \ll \sum_{h \le x/d^2} 2^{\omega(h)} \left(\frac{x}{d^2 h}\right)^k
	= \left(\frac{x}{d^2}\right)^k \sum_{h \le x/d^2} \frac{2^{\omega(h)}}{h^k}.
	\]  
	Since \(\sum_{h\ge 1} 2^{\omega(h)}/h^k\) converges for \(k>1\), we get  
	\[
	S_{\sigma_k}^{(d)}(x) \ll \frac{x^k}{d^{2k}}.
	\]
	
	\medskip
	\textbf{Case \(k=1\):}  
	Use the sharper bound \(\sigma_1(n) \ll n^{1+\varepsilon}\) for any \(\varepsilon>0\), and Lemma~\ref{lem:2omega_sum} with \(a = 1+\varepsilon\):
	\[
	\sum_{h\le y} \frac{2^{\omega(h)}}{h^{1+\varepsilon}} \ll \log y.
	\]
	Hence,
	\[
	S_{\sigma_1}^{(d)}(x) \ll \sum_{h \le x/d^2} 2^{\omega(h)} \left(\frac{x}{d^2 h}\right)^{1+\varepsilon}
	= \frac{x^{1+\varepsilon}}{d^{2(1+\varepsilon)}} \sum_{h \le x/d^2} \frac{2^{\omega(h)}}{h^{1+\varepsilon}} \ll \frac{x}{d^2} \log(x/d^2),
	\]
	for \(\varepsilon>0\) arbitrarily small.
	
\end{proof}

\begin{theorem}
	\label{thm:4}
	Let $f$ be an arithmetic function satisfying $f(n)\ll n^{\alpha}$ for some $0\le \alpha <1$. Then, for any $\varepsilon>0$, we have
	\[
	S_{f,3}^{*}(x) = \frac{C_1(f)}{\zeta(3)} \, x (\log x)^2 + C_2^*(f) \, x \log x + C_3^*(f) \, x + O\Big(x^{1-\delta+\varepsilon}\Big),
	\]
	where $\delta = 43/96$, $\zeta(s)$ is the Riemann zeta function, and
	\[
	C_1(f) := \sum_{d\ge 1} \frac{f(d)}{d},
	\]
	\[
	\begin{aligned}
		C_2^*(f) &:= \sum_{d\ge 1} \frac{\mu(d)}{d^3} \Big( C_2(f) - 6 C_1(f) \log d \Big),\\
		C_3^*(f) &:= \sum_{d\ge 1} \frac{\mu(d)}{d^3} \Big( C_3(f) + 9 C_1(f) (\log d)^2 - 6 C_2(f) \log d \Big),
	\end{aligned}
	\]
	with
	\[
	C_2(f) = (3\gamma-1)\sum_{n\ge 1} \frac{f(n)}{n}, \quad
	C_3(f) = (3\gamma^2 - 3\gamma + 3\gamma_1 + 1)\sum_{n\ge 1} \frac{f(n)}{n},
	\]
	$\gamma$ Euler's constant, and $\gamma_1$ the first Stieltjes constant.
\end{theorem}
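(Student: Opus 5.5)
The plan follows the proof of Theorem~\ref{thm:1}, now with $r=3$, and relies on an unrestricted asymptotic for $S_{f,3}(x)$ which I will take as the input. The input has the form
\[
S_{f,3}(x)=C_1(f)\,x(\log x)^2+C_2(f)\,x\log x+C_3(f)\,x+O\bigl(x^{1-\delta+\varepsilon}\bigr),\qquad \delta=\tfrac{43}{96}.
\]
It comes from the three-dimensional divisor problem: $\sum_{n\le y}\tau_3(n)=yP_2(\log y)+O(y^{43/96+\varepsilon})$ (Kolesnik's exponent), combined with the decomposition of $\sum_{n\le x}\tau_3(n)f(\lfloor x/n\rfloor)$ according to the value $k=\lfloor x/n\rfloor$. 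In that decomposition the main terms come from $\sum_k f(k)\bigl(D_3(x/k)-D_3(x/(k+1))\bigr)$. The exponent would be balanced by splitting at $k\le x^{\beta}$ versus $n\le x^{1-\beta}$, in the manner of \cite{KA-LI-ST}, with $f(n)\ll n^\alpha$ controlling the tail.

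I would take this unrestricted formula as known (or cite it), with the constants $C_1,C_2,C_3$ as stated. Then I apply Lemma~\ref{lem:2} with $r=3$:
\[
S_{f,3}^{*}(x)=\sum_{d\le x^{1/3}}\mu(d)\,S_{f,3}(x/d^3).
\]
Substituting the asymptotic and writing $\log(x/d^3)=\log x-3\log d$ gives
\[
(\log x-3\log d)^2=(\log x)^2-6\log d\,\log x+9(\log d)^2 .
\]
Collecting the powers of $\log x$ produces the three terms with coefficients:
\begin{itemize}
\item $(\log x)^2$: $C_1(f)\sum_d\mu(d)/d^3$;
\item $\log x$: $\sum_d \mu(d)d^{-3}\bigl(C_2(f)-6C_1(f)\log d\bigr)$;
\item constant: $\sum_d \mu(d)d^{-3}\bigl(C_3(f)-3C_2(f)\log d+9C_1(f)(\log d)^2\bigr)$.
\end{itemize}
The first coefficient is exactly $C_1(f)/\zeta(3)$.

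Each finite sum over $d\le x^{1/3}$ is then completed to an infinite series. This needs the cubic analogue of Lemma~\ref{lem:3}: the tails satisfy
\[
\sum_{d>x^{1/3}}\frac{(\log d)^j}{d^3}\ll x^{-2/3}(\log x)^j .
\]
Multiplied by $x(\log x)^{2-j}$, this contributes only $O(x^{1/3}(\log x)^2)$. The error terms sum to
\[
\sum_{d\le x^{1/3}}\Bigl(\frac{x}{d^3}\Bigr)^{1-\delta+\varepsilon}\ll x^{1-\delta+\varepsilon},
\]
since $3(1-\delta)>1$. Both contributions are absorbed in the stated error.

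The main obstacle is the input asymptotic for $S_{f,3}$ with error $x^{1-\delta+\varepsilon}$. Its error actually depends on $\alpha$, and it requires a careful hyperbola-type split. I would first try the simplest split: take $n\le N$ directly, and handle $n>N$ by grouping according to $k=\lfloor x/n\rfloor$, using Kolesnik's bound for $D_3$ at each step.

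Two points in the statement need checking at this stage. First, the constant term should carry $-3C_2(f)\log d$, whereas the statement has $-6C_2(f)\log d$; I would compute this coefficient honestly. Second, the definitions of $C_1(f),C_2(f),C_3(f)$ in the statement should really involve $\sum_k f(k)/(k(k+1))$ together with log-weighted differences, so I would verify the statement's normalization against the derived formula before completing the argument.
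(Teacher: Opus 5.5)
Your reduction is the same as the paper's proof of Theorem~\ref{thm:4}: Lemma~\ref{lem:2} with $r=3$, expansion of $\log(x/d^3)$, completion of the $d$-sums, and $3(1-\delta)>1$ for the error. Both of your corrections are right. The $x$-coefficient carries $-3C_2(f)\log d$, not $-6C_2(f)\log d$. The unrestricted constants must come from the exact differences $\frac{1}{k}P_2(\log\frac{x}{k})-\frac{1}{k+1}P_2(\log\frac{x}{k+1})$, so the leading one is $\frac{1}{2}\sum_k f(k)/(k(k+1))$. For $f\equiv 1$ this gives $\frac{1}{2}$, as it must since $S_{1,3}(x)=\sum_{n\le x}\tau_3(n)$. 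By contrast, the statement's $\sum_d f(d)/d$ diverges, and Lemma~\ref{lem:6}'s $\sum_d f(d)/(2d^2)=\zeta(2)/2$ is wrong. The error there is replacing $\frac{1}{k}-\frac{1}{k+1}$ by $\frac{1}{k^2}$, which discards terms of total size $\asymp x(\log x)^2$.

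The genuine gap is the input you take as known: an unrestricted formula for $S_{f,3}(x)$ with error $O(x^{1-\delta+\varepsilon})$ independent of $\alpha$. All the content of the theorem sits there, and as stated it is false for $\alpha>53/96$.

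To see this, let $f(n)=n^{\alpha}$ for $n$ prime and $f(n)=0$ otherwise, and take a prime $p\ge 5$. When $x$ moves from $p-\frac{1}{2}$ to $p$, the value $\lfloor x/n\rfloor$ changes only for $n\mid p$. The term $n=1$ goes from $f(p-1)=0$ to $f(p)=p^{\alpha}$, and the new term $n=p$ equals $\tau_3(p)f(1)=0$. So $S_{f,3}(p)-S_{f,3}(p-\frac{1}{2})=p^{\alpha}$, and likewise for $S^{*}_{f,3}$. Any main term $x(A\log^2 x+B\log x+C)$ moves by only $O(\log^2 p)$, so the error is not $o(x^{\alpha})$, whatever the constants.

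Independently of that, the split you describe cannot reach $x^{1-\delta}$ for any $\alpha$. The range $n\le N$ costs $x^{\alpha}N^{1-\alpha+\varepsilon}$. The pointwise Kolesnik remainders cost $\sum_{k\le x/N}k^{\alpha}(x/k)^{\delta+\varepsilon}\ll x^{1+\alpha+\varepsilon}N^{-(1+\alpha-\delta)}$. The optimal $N=x^{1/(2-\delta)}$ then gives $x^{\alpha+(1-\alpha)/(2-\delta)+\varepsilon}$, which is already $x^{96/149+\varepsilon}$ at $\alpha=0$, against the claimed $x^{53/96+\varepsilon}$. Lemma~\ref{lem:6} makes exactly this choice of $N$ and then misreports the outcome as $x^{1-\delta+\varepsilon}$, so citing it does not help.

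What your argument actually proves is the formula with corrected constants and error $O\bigl(x^{\alpha+(1-\alpha)/(2-\delta)+\varepsilon}\bigr)$. This exponent depends on $\alpha$, just as in \eqref{eq:2} for $r=2$. Your closing claim that everything is ``absorbed in the stated error'' is the step that fails.
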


The proof of Theorem~\ref{thm:4} relies on the following lemma.
		
\begin{lemma}
	\label{lem:6}
	Let $f$ be an arithmetic function satisfying $f(n)\ll n^{\alpha}$ for some $0\le \alpha <1$. Then, for any $\varepsilon>0$, we have
	\[
	S_{f,3}(x) = C_1(f) x (\log x)^2 + C_2(f) x \log x + C_3(f) x + O\Big(x^{1-\delta+\varepsilon}\Big),
	\]
	where 
	\[
	\begin{aligned}
		C_1(f) &:= \sum_{d\ge 1} \frac{f(d)}{2 d^2},\\
		C_2(f) &:= \sum_{d\ge 1} \frac{f(d)}{d^2} (3\gamma - \log d),\\
		C_3(f) &:= \sum_{d\ge 1} \frac{f(d)}{d^2} \Big( \frac12 (\log d)^2 - 3\gamma \log d + 3\gamma^2 + 3\gamma_1 \Big),
	\end{aligned}
	\]
	with $\gamma$ Euler's constant, $\gamma_1$ the first Stieltjes constant, and $\delta = 43/96$.
\end{lemma}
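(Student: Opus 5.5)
The plan is to decompose $S_{f,3}(x)$ according to the value of $\lfloor x/n\rfloor$ and feed in the sharpest known asymptotic for the Piltz divisor problem with $r=3$. Write
\[
D_3(y)=\sum_{n\le y}\tau_3(n)=y\,P(\log y)+\Delta_3(y),\qquad P(u)=\tfrac12u^2+(3\gamma-1)u+(3\gamma^2-3\gamma+3\gamma_1+1),
\]
with Kolesnik's bound $\Delta_3(y)\ll y^{43/96+\varepsilon}$. The exponent $43/96$ is where $\delta$ in the statement comes from. Since $\lfloor x/n\rfloor=d$ exactly when $x/(d+1)<n\le x/d$, we get
\[
S_{f,3}(x)=\sum_{d\le x} f(d)\Bigl(D_3\bigl(\tfrac{x}{d}\bigr)-D_3\bigl(\tfrac{x}{d+1}\bigr)\Bigr).
\]
I would not use this identity over the whole range. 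Instead I fix a splitting parameter $N$ (to be chosen as a power of $x$) and treat the two ranges of $n$ separately.

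\emph{Small $n$.} The range $n\le x/N$ corresponds to $d\ge N$. Using $f(d)\ll d^{\alpha}$ and $\tau_3(n)\ll n^{\varepsilon}$, this part is bounded trivially:
\[
\sum_{n\le x/N}\tau_3(n)\Bigl(\frac{x}{n}\Bigr)^{\alpha}\ll x^{\alpha}\Bigl(\frac{x}{N}\Bigr)^{1-\alpha+\varepsilon}.
\]

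\emph{Large $n$.} The range $d<N$ goes through the identity above. Inserting the formula for $D_3$ produces two pieces.
\begin{itemize}
\item The error terms contribute $\sum_{d<N}d^{\alpha}(x/d)^{43/96+\varepsilon}$.
\item The main terms are $\frac{x}{d}P(\log x-\log d)-\frac{x}{d+1}P(\log x-\log(d+1))$.
\end{itemize}

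\emph{Extracting the constants.} I expand $P(\log x-\log d)$ in powers of $\log x$ and collect, for each $d$, the coefficients of $x\log^2x$, $x\log x$ and $x$. I then complete the $d$-sums to infinity. The resulting tails are $\ll x(\log x)^2\sum_{d\ge N}d^{\alpha-2}\ll xN^{\alpha-1}(\log x)^2$, which is the same order as the small-$n$ bound. Formally the coefficients are differences of the shape $g(d)-g(d+1)$ with $g(t)=t^{-1}(\log t)^j$. To reach the form stated in Lemma~\ref{lem:6}, with weights $d^{-2}$ and the polynomial $\tfrac12(\log d)^2-3\gamma\log d+3\gamma^2+3\gamma_1$, I would rewrite each difference as $-\int_d^{d+1}g'(t)\,dt$. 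I would then compare it with $g'(d)$, which has the shape $d^{-2}\times(\text{polynomial in }\log d)$, and absorb the discrepancy into the constants.

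\emph{Main obstacle.} I expect this coefficient matching to be the hardest step. The differences are only asymptotic to $d^{-2}$-weights, so I must check carefully whether the discrepancy sum is genuinely a constant or must be kept. I would first try to verify the stated $C_1(f)=\sum_d f(d)/(2d^2)$ against the exact leading coefficient $\tfrac12\sum_d f(d)\bigl(\tfrac1d-\tfrac1{d+1}\bigr)$. If these do not agree, the constants should be read with $\frac1{d(d+1)}$-type weights.

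\emph{Choosing $N$ and the error term.} Finally I choose $N$ to balance $xN^{\alpha-1}$ against $N^{1-43/96+\alpha}x^{43/96}$. This balance must deliver $O(x^{1-\delta+\varepsilon})$ with $\delta=43/96$, and I expect that exponent to come out cleanly only when $\alpha$ is small, i.e.\ $\alpha\le\varepsilon$. For general $\alpha<1$ the same balancing gives an exponent depending on $\alpha$, which would have to be checked against the statement as written.
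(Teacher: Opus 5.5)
Your route is the same as the paper's. You group $n$ by $d=\lfloor x/n\rfloor$, insert $D_3(y)=yP(\log y)+\Delta_3(y)$ with Kolesnik's $\Delta_3(y)\ll y^{43/96+\varepsilon}$ on one range, bound the other range trivially, and balance. The gap is the final step, which you leave open. It cannot be closed, because the lemma is false as stated. Both of your doubts are justified, and you should settle them rather than defer them.

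\emph{Constants.} Take $f\equiv 1$. Then $S_{f,3}(x)=D_3(x)$ exactly, and its $x(\log x)^2$-coefficient is $\tfrac12=\tfrac12\sum_d\frac{1}{d(d+1)}$, whereas the lemma asserts $\tfrac12\zeta(2)$. In general, replacing $\frac{1}{d(d+1)}$ by $\frac{1}{d^2}$ changes the leading coefficient by $\tfrac12\sum_d\frac{f(d)}{d^2(d+1)}$. This is nonzero in general, so it cannot be ``absorbed into the constants'' without changing them. The paper's proof makes exactly this slip. With $\phi(t)=t^{-1}P(\log(x/t))$, it replaces $\phi(d)-\phi(d+1)$ by $-\phi'(d)=(P+P')/d^2$ and discards the second-order Taylor remainder. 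That remainder has size $(\log x)^2/d^3$, so after multiplying by $xf(d)$ and summing over $d$ it is of main-term size. What your argument actually proves is the formula with constants
\[
\begin{aligned}
\widetilde C_1&=\frac12\sum_{d\ge1}\frac{f(d)}{d(d+1)},\\
\widetilde C_2&=\sum_{d\ge1}f(d)\Bigl(\frac{3\gamma-1}{d(d+1)}-\frac{\log d}{d}+\frac{\log(d+1)}{d+1}\Bigr),\\
\widetilde C_3&=\sum_{d\ge1}f(d)\bigl(h(d)-h(d+1)\bigr),
\end{aligned}
\]
where $h(t)=t^{-1}\bigl(\tfrac12(\log t)^2-(3\gamma-1)\log t+3\gamma^2-3\gamma+3\gamma_1+1\bigr)$.

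\emph{Error term.} Your expectation that the exponent $1-\delta$ comes out cleanly for $\alpha\le\varepsilon$ is wrong. Balancing $xN^{\alpha-1}$ against $x^{\delta}N^{1+\alpha-\delta}$ gives $N=x^{(1-\delta)/(2-\delta)}$ and an error $O\bigl(x^{(96+53\alpha)/149+\varepsilon}\bigr)$. Already at $\alpha=0$ this is $x^{96/149}$, and $96/149>53/96$. The paper's choice $N=x^{1/(2-\delta)}$ on the $n$-side is the same balancing, and its claim that this yields $x^{1-\delta+\varepsilon}$ is an arithmetic error.

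Moreover, for $\alpha>53/96$ no method can give the stated bound. Take $f(m)=m^{\alpha}$ for $m$ prime and $f(m)=0$ otherwise. Then $x\mapsto S_{f,3}(x)$ jumps at every prime $m\ge 5$ by exactly $f(m)-f(m-1)+\tau_3(m)f(1)=m^{\alpha}$. Any main term $c_1x(\log x)^2+c_2x\log x+c_3x$ is continuous, so the error is not $O(x^{\beta})$ for any $\beta<\alpha$.

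So your plan, carried out correctly, proves the corrected statement above with error $O\bigl(x^{(96+53\alpha)/149+\varepsilon}\bigr)$, not the lemma as written.
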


\begin{proof}
	First, we have
	\[
S_{f,3}(x) = \sum_{n\le x} f\Big(\Big\lfloor \frac{x}{n} \Big\rfloor \Big) \tau_3(n),
	\]
	and split the sum at a parameter $N\in[1,x]$:
	\[
	S_f^{(3)}(x) = E_1 + E_2, \quad
	E_1 := \sum_{n\le N} f\Big(\Big\lfloor \frac{x}{n} \Big\rfloor \Big) \tau_3(n), \quad
	E_2 := \sum_{N<n\le x} f\Big(\Big\lfloor \frac{x}{n} \Big\rfloor \Big) \tau_3(n).
	\]
	
	\medskip
	\noindent
Estimate of $E_1$. Using $\tau_3(n)\ll n^\varepsilon$ and $f(n)\ll n^\alpha$, we get
	\[
	E_1 \ll \sum_{n\le N} (x/n)^\alpha n^\varepsilon \ll x^\alpha N^{1-\alpha+\varepsilon}.
	\]
	
	\medskip
	\noindent
	Estimate of $E_2$. For $n>N$, set $d = \lfloor x/n \rfloor$, so that $x/(d+1)<n\le x/d$. Introducing
	\[
	A_3(t) := \sum_{n\le t} \tau_3(n),
	\]
	we may write
	\[
	E_2 = \sum_{d\le x/N} f(d) \big(A_3(x/d) - A_3(x/(d+1))\big).
	\]
	
	\medskip
	\noindent
	It is known that (see e.g. \cite {Ivić})
	\[
	A_3(t) = t P_2(\log t) + \Delta_3(t), \quad \Delta_3(t) \ll t^{\delta+\varepsilon},
	\]
	with 
	\[
	P_2(u) = \frac12 u^2 + (3\gamma-1)u + (3\gamma^2-3\gamma+3\gamma_1+1).
	\]
	
	\noindent
	Using a Taylor expansion around $u = \log(x/d)$ with $h = -1/d + O(1/d^2)$, we obtain
	\[
	P_2(\log(x/(d+1))) = P_2(\log(x/d)) - \frac{P_2'(\log(x/d))}{d} + O\Big(\frac{\log x}{d^2}\Big),
	\]
	and therefore
	\[
	\frac{P_2(\log(x/d))}{d} - \frac{P_2(\log(x/(d+1)))}{d+1} 
	= \frac{P_2(\log(x/d)) + P_2'(\log(x/d))}{d^2} + O\Big(\frac{\log x}{d^3}\Big).
	\]
	
	\noindent
Writing 
\[
P_2(\log(x/d)) + P_2'(\log(x/d)) = \frac{1}{2} (\log(x/d))^2 + 3 \gamma \, \log(x/d) + \tilde{c}, 
\quad \text{with} \quad \tilde{c} = 3\gamma^2 + 3\gamma_1,
\]
then, we obtain

	\[
	\sum_{d\le x/N} f(d) \big(A_3(x/d) - A_3(x/(d+1))\big)
	= x C_1(f) (\log x)^2 + x C_2(f) \log x + x C_3(f) + O\Big(\frac{x}{N}\Big),
	\]
	with the constants $C_i(f)$ as defined above.
	
	\medskip
	\noindent
   From the error term, we have
   \[
   \sum_{d \le x/N} |f(d)| \, \Delta_3(x/d) \ll x^{1+\alpha+\varepsilon} \, N^{-(1+\alpha-\delta)}.
   \]

	\medskip
	\noindent
Finally, by balancing the error terms, we choose $N = x^{1/(2-\delta)}$, which results in a total error of
\[
O\big(x^{1-\delta+\varepsilon}\big).
\]

\medskip
\noindent
Collecting all contributions, we then obtain
\[
S_f^{(3)}(x) = C_1(f) \, x (\log x)^2 + C_2(f) \, x \log x + C_3(f) \, x + O\big(x^{1-\delta+\varepsilon}\big).
\]

\end{proof}

\begin{proof}[Proof of Theorem~\ref{thm:4}]
First, by Lemma~\ref{lem:2}, we have
\[
S_{f,3}^{\ast}(x)
=
\sum_{d \le x^{1/3}} \mu(d)\,
S_{f,3}\!\left(\frac{x}{d^3}\right).
\]

Applying Lemma~\ref{lem:6} with $y = x/d^3$ then gives
\[
S_{f,3}^{\ast}(x) = \sum_{d \ge 1} \mu(d) \Bigg[ 
C_1(f) \frac{x}{d^3} (\log(x/d^3))^2 
+ C_2(f) \frac{x}{d^3} \log(x/d^3) 
+ C_3(f) \frac{x}{d^3} 
+ O\Big((x/d^3)^{1-\delta+\varepsilon}\Big) 
\Bigg],
\]

and we have
\[
\log(x/d^3) = \log x - 3 \log d, \quad (\log(x/d^3))^2 = (\log x)^2 - 6 (\log x)(\log d) + 9 (\log d)^2.
\]

	Hence,
	\begin{align*}
		\sum_{d\ge 1} \frac{\mu(d)}{d^3} C_1(f) x (\log(x/d^3))^2
		&= x (\log x)^2 C_1(f) \sum_{d\ge 1} \frac{\mu(d)}{d^3} 
		- 6 C_1(f) x (\log x) \sum_{d\ge 1} \frac{\mu(d) \log d}{d^3} \\
		&\quad + 9 C_1(f) x \sum_{d\ge 1} \frac{\mu(d) (\log d)^2}{d^3}.
	\end{align*}
	
	Since $\sum_{d\ge 1} \mu(d)/d^3 = 1/\zeta(3)$, we obtain the main term
	\[
	\frac{C_1(f)}{\zeta(3)} x (\log x)^2,
	\]
	and the remaining terms contribute to $C_2^*(f)$ and $C_3^*(f)$ as defined above.
	
	Finally, the error term satisfies
	\[
	\sum_{d\ge 1} O\Big((x/d^3)^{1-\delta+\varepsilon}\Big) \ll O(x^{1-\delta+\varepsilon}).
	\]
 This completes the proof.
\end{proof}

\bigskip \noindent \newline
Meselem KARRAS,\newline
Faculty of Science and Technology, Tissemsilt University, Algeria. \newline
FIMA Laboratory, Khemis Miliana University, Algeria. \newline
Email: \texttt{m.karras@univ-tissemsilt.dz}

\end{document}